\numberwithin{equation}{section}
  \newtheorem{theorem}{Theorem}[section]
  \newtheorem{corollary}[theorem]{Corollary}
  \newtheorem{definition}[theorem]{Definition}
  \newtheorem{example}[theorem]{Example}
\title[On contact screen conformal null submanifolds]{On contact screen conformal null  submanifolds}
\author[Samuel Ssekajja]{Samuel Ssekajja*}
\newcommand{\acr}{\newline\indent}
\address{\llap{*\,} School of Mathematics\acr
 University of Witwatersrand\acr
 Private Bag 3, Wits 2050\acr
South Africa}
\email{samuel.ssekajja@wits.ac.za} 
\thanks{}
\subjclass[2010]{Primary 53C25; Secondary 53C40, 53C50}
\keywords{Contact screen conformal submanifold, Null submanifolds, Integrable screens}
\begin{document}
\begin{abstract}
First, we prove that indefinite Sasakian manifolds do not admit any screen conformal $r$-null submanifolds, tangent to the structure vector field. We, therefore,  define a special class of null submanifolds, called; {\it contact screen conformal} $r$-null submanifold of indefinite Sasakian manifolds. Several characterisation results, on the above class of null submanifolds, are proved. In particular, we prove that such null submanifolds exists in indefinite Sasakian space forms of constant holomorphic sectional curvatures of $-3$. 
\end{abstract}
\maketitle
\section{Introduction} 
In the theory of non-degenerate submanifolds, the second fundamental forms and their respective shape operators are related by means of the metric tensor. Contrary to this, there are interrelations between the second fundamental forms of null submanifold  and its screen distribution and their respective shape operators. These interrelations indicates that the null geometry depends on a choice of screen distribution as explained in  \cite{ds2}. While we know that the second fundamental forms of the null submanifolds are independent of a screen (see Theorem 5.1.2 of \cite[p. 199]{ds2}), the same is not true for the fundamental forms of the screens, which is the main cause of non-uniqueness anomaly in the null geometry. Since, in general, it is impossible to remove this anomaly, the authors in \cite{ds2} considered null hypersurfaces and half null submanifolds for which the null and screen second fundamental forms are conformally related. Such classes of null submanifolds are called screen conformal (see Definition 2.2.1 of \cite[p. 51]{ds2} and Definition 4.4.1 of \cite[p. 179]{ds2}). However, this condition can not be used for the case of general $r$-null submanifolds. For this reason, Duggal-Sahin \cite{ds2}, extended the concept of screen conformal to general null submanifolds of semi-Riemannian manifolds (see Definition 5.2.2 of \cite{ds2}).

 In case the  ambient manifold is an indefinite Sasakian manifold, we note that the above concept is not applicable due to some obvious contradictions asproved in Theorem \ref{main1}. Therefore, we introduce the notion of contact screen conformal null submanifolds to cover this gap. Null submanifolds have numerous applications in mathematical physics, particularly in general relativity and electromagnetism, see \cite{db, ds2} for amore details. Many research papers have been published on null geometry, for example \cite{antii, ca, jun, jinduga, db1, Jin, Jin11, dhjin, sam1}, and many more references cited therein. The main objective of this paper is define the concept of contact screen conformal $r$-null submanifolds of indefinite Sasakian manifolds. We prove that such null submanifolds exists in indefinite Sasakian space forms of constant holomorphic sectional curvatures of $-3$. The rest of the paper is arranged as follows; In section \ref{pre} we give basic notions needed in the rest of the paper and in Section \ref{conta}, we present our main results.

\section{Preliminaries} \label{pre}
Let $(\overline{M},\overline{g})$ be a real $(m+n)$-dimensional semi-Riemannian manifold of constant index $\nu$  such that $m,n\ge 1$, $1\le \nu \le m+n-1$, and let $(M,g)$ be an $m$-dimensional submanifold of $\overline{M}$. In case $g$ is \textit{degenerate} on the tangent bundle $TM$ of $M$, we say that $M$ a \textit{null submanifold} \cite{db}. We denote the set of smooth sections of a vector bundle $\Xi$ by $\Gamma(\Xi)$. For a degenerate metric tensor $g=\overline{g}_{|{TM}}$, there exists locally a non-zero vector field $\xi\in \Gamma(TM)$ such that $g(\xi,X)=0$, for any $X\in\Gamma(TM)$. Then, for each tangent space $T_{x}M$, $x\in M$, we have 
$
 T_{x}M^{\perp}=\{u\in T_{x}\overline{M}:\overline{g}(u,v)=0,\; \forall\, v\in T_{x}M\},
$
which is a degenerate $n$-dimensional subspace of $T_{x}\overline{M}$. The \textit{radical} or \textit{null} subspace of $M$ is denoted by $\mathrm{Rad}\,T_{x}M$ and is given by
$\mathrm{Rad}\,T_{x}M=\{\xi_{x}\in T_{x}M: g(\xi_{x},X)=0,\; \forall\, X\in T_{x}M\}$. Notice that $\mathrm{Rad}\,T_{x}M=T_{x}M\cap T_{x}M^{\perp}$ and its dimension depends on $x\in M$. A submanifold $M$ of $\overline{M}$ is called $r$-null if the mapping 
$ 
\mathrm{Rad}\,TM:x\longrightarrow  \mathrm{Rad}\,T_{x}M,
$ 
defines a smooth distribution of rank $r>0$, where $\mathrm{Rad}\,TM$ is called the radical (null) distribution on $M$. Let $S(T M)$ be a \textit{screen distribution} which is a semi-Riemannian complementary distribution of $\mathrm{Rad}\,T M$ in $T M$, and is given by $T M = \mathrm{Rad}\,T M \perp S(T M)$. Note that  the distribution $S(TM)$ is not unique and canonically isomorphic to the factor vector bundle $TM/ \mathrm{Rad}\, TM$  \cite{db}. Choose a screen transversal bundle $S(TM^\perp)$, which is semi-Riemannian complementary to $\mathrm{Rad}\, TM$ in $TM^\perp$. Since, for any local basis $\{\xi_i \}$ of  $\mathrm{Rad}\,TM$, there exists a local null frame $\{N_i\}$ of sections with values in the orthogonal complement of $S(T M^\perp)$ in $S(T M )^\perp$  such that $\overline{g}(\xi_i , N_j ) = \delta_{ij}$, it follows that there exists a {\it null transversal vector bundle } $l\mathrm{tr}(TM)$ locally spanned by $\{N_i\}$ \cite{db}. Let $\mathrm{tr}(TM)$ be complementary (but not orthogonal) vector bundle to $TM$ in $T\overline{M}$. Then, 
\begin{align}\nonumber
  \mathrm{tr}(TM)&=l\mathrm{tr}(TM)\perp S(TM^\perp),\\
  T\overline{M}&= S(TM)\perp S(TM^\perp)\perp\{\mathrm{Rad}\, TM\oplus l\mathrm{tr}(TM)\}\nonumber\\
  &= TM\oplus\mathrm{tr}(TM).\nonumber
\end{align}
 We say that a null submanifold $M$ of $\overline{M}$ is 
 \begin{enumerate}
 	\item $r$-null if $1\leq r< \min\{m,n\}$,
 	\item  co-isotropic if $1\leq r=n<m$, $S(TM^{\perp})=\{0\}$,
  	\item isotropic if $1\leq r=m<n$,  $S(TM)=\{0\}$,
 	\item totally null if $r=n=m$,  $S(TM)=S(TM^\perp)=\{0\}$.
  \end{enumerate}
 Details on the above classes of null submanifolds with examples are found in \cite{db,ds2}. Let $M$ be a coisotropic null submanifold and consider a local quasi-orthonormal fields of frames of $\overline{M}$ along $M$, on $\mathcal{U}$ as 
$
\{ \xi_1,\cdots, \xi_r,N_1,\cdots, N_r,Z_{r+1},\cdots,Z_{m}\},
$
where
 $\{Z_{r+1},\cdots,Z_{m}\}$ is an orthogonal basis of $\Gamma(S(TM)|_{\mathcal{U}})$   and that $\epsilon_a= g(Z_{a},Z_{a})$ is the signature of $\{Z_{a}\}$. The following range of indices will be used. $i,j,k\in\{1,\cdots,r\}$. Let $P$ be the projection morphism of $TM$ onto $S(TM)$. Then, the Gauss-Weingartein equations \cite{ds2} of a coisotropic submanifold $M$ and $S(TM)$ are  
  \begin{align}
      \overline{\nabla}_X Y&=\nabla_X Y+\sum_{i=1}^{r<m} h_i^l(X,Y)N_i,\label{eq11}\\
      \overline{\nabla}_X N_i&=-A_{N_i} X+\sum_{j=1}^{r<m} \tau_{ij}(X) N_{j},\label{eq31}\\
      \nabla_X P Y&=\nabla_X^* PY+\sum_{i=1}^{r<m} h_i^*(X, P Y)\xi_i,\\
      \nabla_X \xi_i &=-A_{\xi_i}^* X-\sum_{j=1}^{r<m} \tau_{ji}(X) \xi_j\label{eq50},
  \end{align}
  for all $X,Y\in \Gamma(TM)$
where  $\nabla$ and $\nabla^*$ are the {\it induced connections} on $TM$ and $S(TM)$ respectively, $h_i^l$'s are symmetric bilinear forms known as \textit{local null fundamental forms} of $TM$. Also, $h_i^*$'s are the \textit{local second fundamental forms} of $S(TM)$. On the other hand, $A_{N_i}$'s and  $A_{\xi_i}^{*}$'s  are linear operators on $TM$ while $\tau_{ij}$'s are 1-forms on $TM$. It is easy to see from (\ref{eq11}) that $h_i^l(X,Y)= \overline{g}(\overline{\nabla}_X Y,\xi_{i})$, for all $X,Y\in \Gamma(TM)$, from which we deduce the independence of $h_i^l$s on  the choice of $S(TM)$. It is easy to see that  $\nabla^*$ is a {\it metric connection} on $S(TM)$ while $\nabla$ is generally not a metric connection and satisfies the relation
      \begin{equation}\label{metric}
         (\nabla_X g)(Y,Z)=\sum_{i=1}^r\{h_i^l(X,Y)\theta_i(Z)+h_i^l(X,Z)\theta_i(Y)\},
      \end{equation}
for any $X,Y\in \Gamma(TM)$ and 1-forms $\theta_i$ given by $\theta_{i}(X)=\overline{g}(X,N_i)$, for all  $X\in \Gamma(TM)$.
     The above two types of local second fundamental forms are related to their shape operators by the following set of equations
      \begin{align}
          g(A_{\xi_i}^*X,Y)&=h_i^l(X,Y)+\sum_{j=1}^rh_j^l(X,\xi_i)\theta_j(Y),\;\;\;\;\bar{g}(A_{\xi_i}^*X,N_j)=0,\label{eqe}\\
         g(A_{N_i}X,Y)&=h_i^*(X,PY),  \;\; \; \theta_{j}(A_{N_{i}}X)+\theta_{i}(A_{N_{j}}X),\;\;X,Y\in \Gamma(TM).\label{K5} 
      \end{align}
 Let $(M,g,S(TM))$ be an $m$-dimensional $r$-null coisotropic submanifold of  $(\overline{M},\overline{g})$. Let $\overline{R}$ and $R$ denote the curvature tensors of $\overline{\nabla}$ and $\nabla$ respectively. The following curvature identities are needed in this paper (see \cite{db} or \cite{ds2} for details) 
\begin{align}
\overline{R}(X,Y)Z=R(X,&Y)Z+\sum_{i=1}^{r}\{h_{i}^{l}(X,Z)A_{N_{i}}Y-h_{i}^{l}(Y,Z)A_{N_{i}}X\}\nonumber\\
&+\sum_{i=1}^{r}[ (\nabla_{X}h_{i}^{l})(Y,Z)-(\nabla_{Y}h_{i}^{l})(X,Z)\nonumber\\
&+\sum_{j=1}^{r}\{\tau_{ji}(X)h_{j}(Y,Z)-\tau_{ji}(Y)h_{j}(X,Z)\}]N_{i},\label{m9}\\
R(X,Y)PZ=R^{*}(&X,Y)PZ+\sum_{i=1}^{r}\{h_{i}^{*}(X,PZ)A_{\xi_{i}}^{*}Y-h_{i}^{*}(Y,PZ)A_{\xi_{i}}^{*}X\}\nonumber\\
&+\sum_{i=1}^{r}[(\nabla_{X}h_{i}^{*})(Y,PZ)-(\nabla_{Y}h_{i}^{*})(X,PZ)\nonumber\\
&+\sum_{j=1}^{r}\{\tau_{ij}(Y)h_{j}^{*}(X,PZ)-\tau_{ij}(X)h_{j}^{*}(Y,PZ)\}]\xi_{i},\label{m10}
\end{align}
for all $X,Y,Z\in\Gamma(TM)$.    An old dimensional smooth manifold $(\overline{M},\overline{g})$ is called a contact metric manifold \cite{ds2} if there exist a (1,1) -tensor field $\overline{\phi}$, a vector field $\zeta$, called the characteristic vector field, and its 1-form $\eta$ satisfying 
  \begin{align}
  	\overline{\phi}^{2}X&=-X+\eta(X)\zeta,\;\; \overline{\phi}\zeta=0, \;\;\; \eta \circ \overline{\phi}=0, \;\;\; \eta(\zeta)=1,\label{m1}\\
  	\overline{g}(\overline{\phi}X,\overline{\phi}Y)&=\overline{g}(X,Y)-\eta(X)\eta(Y),\;\;\; \eta(X)=\overline{g}(\zeta,X),\label{m2}\\
  	&d\eta(X,Y)=\overline{g}(\overline{\phi}X,Y),\;\;\;\forall\, X,Y\in \Gamma(T\overline{M}).\label{{m3}}
  \end{align}
 Then, the set $(\overline{\phi},\eta, \zeta, \overline{g})$  is called a contact metric structure on $\overline{M}$. Furthermore, $\overline{M}$ has a normal contact structure \cite{ds2} if $N_{\overline{\phi}}+2d\eta \otimes \zeta=0$, where $N_{\overline{\phi}}$ is the Nijenhuis tensor field of $\overline{\phi}$. A normal contact metric manifold is called Sasakian \cite{ds2} for which we have 
 \begin{align}\label{m4}
 	(\overline{\nabla}_{X}\overline{\phi})(Y)=\overline{g}(X,Y)\zeta-\eta(Y)X,\;\;\; \forall\, X,Y\in \Gamma(T\overline{M}),
 \end{align}
  where $\overline{\nabla}$ is a metric connection on $\overline{M}$. A Sasakian manifold $\overline{M}=(\overline{M},\overline{\phi}, \zeta,\eta, \overline{g})$ is called an {\it indefinite Sasakian manifold} \cite{ds2} if $(\overline{M},\overline{g})$ is a semi-Riemannian manifold of index $\nu(>0)$. Replacing $Y$ by $\zeta$ in (\ref{m4}), and using (\ref{m1}), we get 
\begin{align}\label{p4}
	\overline{\nabla}_{X}\zeta=-\overline{\phi}X,\;\;\;\,\forall\, X\in \Gamma(T\overline{M}).
\end{align}

A plane section $\pi$ in $T_{x}\overline{M}$ of a Sasakian manifold $\overline{M}$ is called a $\overline{\phi}$-section if it is spanned by a unit vector $X$ orthogonal to $\zeta$ and $\overline{\phi}X$, where $X$ is a non-null vector field on $\overline{M}$. The sectional curvature $K(X,\overline{\phi}X)$ of a $\overline{\phi}$-section is called a $\overline{\phi}$-sectional curvature. If $\overline{M}$ has a $\overline{\phi}$-sectional curvature $c$ which does not depend on the $\overline{\phi}$-section at each point, then, $c$ is constant in $\overline{M}$ and $\overline{M}$ is called a {\it Sasakian space form}, denoted by $\overline{M}(c)$. Moreover, the curvature tensor $\overline{R}$ of $\overline{M}$ satisfies (see \cite{ds2} for more detailes)
\begin{align}\label{p25}
	4\overline{R}(X,Y)Z&=(c+3)\{ \overline{g}(Y,Z)X-\overline{g}(X,Z)Y\}+(c-1)\{ \eta(X)\eta(Z)Y \nonumber\\
	        &-\eta(Y)\eta(Z)X+\overline{g} (X,Z)\eta(Y)\zeta- \overline{g} (Y,Z)\eta(X)\zeta+\overline{g}(\overline{\phi}Y,Z)\overline{\phi}X\nonumber\\
	        &-\overline{g}(\overline{\phi}X,Z)\overline{\phi}Y-2\overline{g}(\overline{\phi}X,Y)\overline{\phi}Z\},\;\;\forall\, X,Y,Z\in \Gamma(T\overline{M}).
\end{align}
 Let $(M,g)$  be a null submanifold of an indefinite Sasakian manifold $(\overline{M},\overline{g})$. If the characteristic vector  field $\zeta$ is tangent to $M$, then it is obvious that $\zeta$ does not belong to $\mathrm{Rad}\, TM$. This enables one to choose a screen distribution $S(TM)$ which contains $\zeta$. This implies that if $\zeta$ is tangent to $M$, then it belongs to $S(TM)$ (see Calin \cite{ca} for more details). Let $M$ be a coisotropic null submanifold of an indefinite Sasakian  $\overline{M}$. Furthermore, we assume that $\overline{\phi}\mathrm{Rad}\, TM$ and $\overline{\phi}l\mathrm{tr}(TM)$ are subbundles of $S(TM)$. It follows that $S(TM)=\{\overline{\phi}\mathrm{Rad}\, TM\oplus \overline{\phi}l\mathrm{tr}(TM)\}\perp D_{0}\perp \mathbb{R}\zeta$, where $D_{0}$ is a non-degenerate almost complex distribution with respect to $\overline{\phi}$, and $\mathbb{R}\zeta$ is a line bundle spanned by $\zeta$. Then, we have $TM=D\oplus D'\perp \mathbb{R}\zeta$, where $D=\mathrm{Rad}\, TM\perp\overline{\phi}\mathrm{Rad}\, TM\perp D_{0}$ and $D'=\overline{\phi}l\mathrm{tr}(TM)$. Consider local null vector fields $U_{i}$,$V_{i}$, for each $i\in \{1,\ldots, r\}$, and their 1-forms $u_{i}$,  $v_{i}$ defined by 
 \begin{align}
 	U_{i}&=-\overline{\phi}N_{i},\;\;\;\;\;\;\;\;\;\;\;\;\; V_{i}=-\overline{\phi}\xi_{i},\label{m6}\\
 	u_{i}(X)&=g(X,V_{i}),\;\;\;\; v_{i}(X)=g(X,U_{i}).\label{m7}
 \end{align}
 Let $S$ be the projection morphism of $TM$ onto $D$. Then, for any $X\in \Gamma(TM)$,
 \begin{align}\label{m8}
 	\overline{\phi}X=\phi X+\sum_{i=1}^{r}u_{i}(X)N_{i},
 \end{align}
where $\phi$ is a tensor field of type (1,1) globally defined on $M$ by $\phi=\overline{\phi}\circ S$. By a direct calculation using (\ref{eq11})--(\ref{eq50}), (\ref{m6})--(\ref{m8}), we derive 
\begin{align}
	h_{j}^{l}(X,U_{i})&=h_{i}^{*}(X,V_{j}),\;\;\; h_{j}^{l}(X,V_{i})=h_{i}^{l}(X,V_{j}),\label{m11}\\
	\nabla_{X}U_{i}&=\phi A_{N_{i}}X+\sum_{j=1}^{r}\tau_{ij}(X)U_{j}-\theta_{i}(X)\zeta,\label{m12}\\
	\nabla_{X}V_{i}&=\phi A_{\xi_{i}}^{*}X-\sum_{j=1}^{r}\tau_{ji}(X)V_{j}+\sum_{j=1}^{r}h_{j}^{l}(X,\xi_{i})U_{j},\label{m15}
\end{align}
 for all $X,\in \Gamma(TM)$. On the other hand, using (\ref{p4}), (\ref{eq11})--(\ref{eq50}), we have 
 \begin{align}
 	h_{i}^{l}(X,\zeta)=-u_{i}(X),\;\;\;\;\; h_{i}^{*}(X,\zeta)=-v_{i}(X),\label{m14}
 \end{align}
 for all $X\in \Gamma(TM)$.

\section{Contact screen conformal submanifolds}\label{conta}

On a null hypersurface of a semi-Riemannian manifold, the screen and null shape operators $A_{\xi}^{*}$ and $A_{N}$, respectively, where $\xi\in \Gamma(TM^{\perp})$ and $N\in \Gamma(tr(TM))$, are both screen-valued operators. Due to this fact, it is always possible to link the two operators via a non-vanishing smooth function to give rise to a class of hypersurfaces called; {\it screen conformal null hypersurfaces} (see \cite[Definition 2.2.1]{ds2}).  A similar consideration is done for half null submanifolds (see \cite[Definition 4.4.1]{ds2}). However, this cannot be considered for a general null submanifold due to the fact that the shape operators $A_{N_{i}}$, for all $i\in \{1\ldots, r\}$, are generally not screen-valued. For the above reason, Duggal-Sahin \cite{ds2} defined a certain type of screen conformality for a coisotropic submanifold as follows;
\begin{definition}[\cite{ds2}]\label{def1}
	\rm{
A coisotropic null submanifold $(M,g, S(TM))$	 of a semi-Riemannian manifold $(\overline{M},\overline{g})$ is called a screen locally conformal submanifold if the fundamental forms $h_{i}^{*}$ of $S(TM)$ are conformally related to the corresponding null fundamental forms $h_{i}^{l}$ of $M$ by 
\begin{align}\label{m17} 
	h_{i}^{*}(X,PY)=\varphi_{i}h_{i}^{l}(X,Y),\;\;\;\forall\, X,Y\in \Gamma(TM), \;\; i\in \{i,\ldots, r\},
\end{align}
where the $\varphi_{i}'$s are smooth functions on the neighbourhood  $\mathcal{U}$ of $M$.
}
	\end{definition}
 It is then proved, in Theorem 5.2.3 of \cite[p. 204]{ds2},  that any screen distribution satisfying Definition \ref{def1} is integrable. Further still, the definition is extended to $r$-null submanifolds in which similar conclusions on the integrability of $S(TM)$ are reached (see Theorem 5.2.6 of \cite[p. 219]{ds2}).
\begin{example}[\cite{ds2}] 
\rm{
Let us consider the coisotropic submanifold $x_{2}=(x_{3}^{2}+x_{5}^{2})^{1/2}$, $x_{4}=x_{1}$, $x_{3}, x_{5}>0$ of $\overline{M}=(\mathbb{R}_{2}^{5},\overline{g})$, where $\mathbb{R}_{2}^{5}$ is a semi-Euclidean space of signature $(-,-,+,+.+)$ with respect to the canonical basis $(\partial x_{1}, \partial x_{2}, \partial x_{3}, \partial x_{4}, \partial x_{5})$. Then, it is easy to check that $S(TM)=\mathrm{Span}\{X\}$, $\mathrm{Rad}\, TM=\mathrm{Span}\{\xi_{1},\xi_{2}\}$ and $l\mathrm{tr}(TM)=\mathrm{Span}\{N_{1},N_{2}\}$, where $X=x_{5}\partial {x_{2}}+x_{2}\partial x_{5}$, $\xi_{1}=\partial_{1}+\partial_{4}$, $\xi_{2}=x_{2}\partial x_{2}+x_{3}\partial x_{3}+ x_{5}\partial_{5}$, $N_{1}=(1/2)(-\partial x_{1}+\partial x_{4})$  and $N_{2}=(1/2x_{3}^{2})(-x_{2}\partial x_{2}+x_{3}\partial x_{3}-x_{5}\partial x_{5})$. Then, a direct a direct calculation reveals that $\overline{\nabla}_{\xi_{1}}X=0$, $\overline{\nabla}_{\xi_{2}}X=X$, $\overline{\nabla}_{\xi_{1}}\xi_{2}=0$ and $\overline{\nabla}_{X}X=x_{2}\partial x_{2}+x_{5}\partial x_{5}$. Next, by Gauss' formulae, we have $\nabla_{X}X=(1/2)\xi_{2}$, $h_{1}^{*}(X,X)=0$, $h_{1}^{*}(\xi_{1},X)=h_{2}^{*}(\xi_{1},X)=0$, $h_{1}^{*}(\xi_{2},X)=h_{2}^{*}(\xi_{2},X)=h_{1}^{l}(\xi_{1},X)=h_{2}^{l}(\xi_{2},X)=0$, $h_{1}^{l}=0$, $h_{2}^{l}(X,X)=-x_{3}^{2}$ and $h_{2}^{*}=1/2$. It follows that $M$ is screen conformal with $\varphi_{1}$ arbitrary and $\varphi_{2}=-1/2x_{3}^{2}$.
}
\end{example}
 However, it is very important to note that when the ambient space is an indefinite Sasakian manifold, such screen conformal null submanifolds, tangent to $\zeta$, i.e. $\zeta\in \Gamma(TM)$, do not exist. In fact, we have have the following result.
\begin{theorem}\label{main1}
	There does not exist any screen locally conformal null subamnifolds $(M,g, S(TM))$, tangent to the structure vector field $\zeta$, of an indefinite Sasakian manifold $(\overline{M},\overline{g})$. 
\end{theorem}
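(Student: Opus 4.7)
I would argue by contradiction: suppose such a screen locally conformal null submanifold $M$ exists with $\zeta \in \Gamma(TM)$. Because $\zeta$ is a unit vector field while $\mathrm{Rad}\,TM$ consists of null vectors, $\zeta$ must actually lie in $\Gamma(S(TM))$, so that $P\zeta = \zeta$.

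The key observation is that the Sasakian identities (\ref{m14}) convert the two second fundamental forms evaluated at $\zeta$ into the purely algebraic 1-forms $u_i$ and $v_i$. Specialising the conformal relation (\ref{m17}) at $Y = \zeta$, then substituting (\ref{m14}) into both sides and cancelling signs, collapses the whole identity to the pointwise relation
\begin{equation*}
v_i(X) \;=\; \varphi_i\, u_i(X), \qquad \forall\, X \in \Gamma(TM), \;\; i \in \{1,\ldots,r\}.
\end{equation*}

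To extract a contradiction, I would test this identity at the canonical tangent section $X = V_i$, which is a well-defined element of $\Gamma(TM)$ in the coisotropic setting (the $l\mathrm{tr}(TM)$ component of $\overline{\phi}\xi_i$ vanishes by skew-symmetry of $\overline{\phi}$ with respect to $\overline{g}$, while the $S(TM^{\perp})$ component is absent because $S(TM^{\perp}) = \{0\}$). A one-line computation using (\ref{m2}), together with $\eta(\xi_i) = \eta(N_i) = 0$, gives $u_i(V_i) = \overline{g}(\xi_i, \xi_i) = 0$ because $\xi_i$ is null, whereas $v_i(V_i) = \overline{g}(\xi_i, N_i) = 1$. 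Plugging these two values into $v_i(V_i) = \varphi_i\, u_i(V_i)$ forces $1 = 0$.

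I do not expect any serious obstacle: once (\ref{m14}) and (\ref{m17}) are combined, the whole argument is purely algebraic. The only point worth double-checking is that $V_i$ genuinely lies in $\Gamma(TM)$, which is automatic for the coisotropic submanifolds to which Definition \ref{def1} applies.
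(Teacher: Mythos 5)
Your proposal is correct and follows essentially the same route as the paper: both specialise the conformal relation (\ref{m17}) at $\zeta$, use (\ref{m14}) to reduce it to $v_i(X)=\varphi_i u_i(X)$, and then evaluate at $X=V_i$ (where $v_i(V_i)=1$ while $u_i(V_i)=0$) to force $1=0$. The paper additionally tests $X=U_j$, but that is redundant; your single substitution already yields the contradiction.
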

\begin{proof}
	Assume, on contrary, that $M$ is locally screen conformal, then from (\ref{m14}) and (\ref{m17}) of Definition \ref{def1}, we have 
	\begin{align}\label{m18}
		-v_{i}(X)= h_{i}^{*}(X,\zeta)=\varphi_{i}h_{i}^{l}(X,\zeta)=-\varphi_{i}u_{i}(X),
	\end{align}
for all $X\in \Gamma(TM)$. Setting $X=V_{j}$ in (\ref{m18}) we get $-v_{i}(V_{j})=-\delta_{ij}=0$, for all $i,j \in \{1,\ldots,r\}$, which is a contradiction. On the other hand letting $X=U_{j}$ in (\ref{m18}), we get $-\varphi_{i} u_{i}(U_{j})=0$. As $\varphi_{i}$'s are nonzero, it follows that $-u_{i}(U_{j})=-\delta_{ij}=0$, which is also a contradiction. Hence, $M$ can not be locally screen conformal in an indefinite Sasakian manifold.
\end{proof}
Based on Theorem \ref{main1}, we notice that Definition \ref{def1} fails for null submanifolds of indefinite Sasakian manifolds, mainly in portions of $TM$ containing the structure vector field $\zeta$. This can be rectified by defining the concept of screen conformality of $h_{i}^{*}$ and $h_{i}^{l}$ on $D\oplus D'$, instead of $TM=D\oplus D'\perp \mathbb{R}\zeta$. To this end, let $\tilde{P}$ be the projection morphism of $TM$ onto the subbundle $D\oplus D'$. It then follows easily that any $X\in \Gamma(TM)$ can be written as $X=\tilde{P}X+\eta(X)\zeta$. Then, by a direct calculation, we have 
\begin{align}\label{m20}
h_{i}^{*}(\tilde{P}X,\tilde{P}PY)
&=h_{i}^{*}(X,Y)-\eta(Y)h_{i}^{*}(X,\zeta)-\eta(X)h_{i}^{*}(\zeta,Y),	
\end{align}
for all $X,Y\in \Gamma(TM)$, in which we have used (\ref{m14}) to deduce that $h_{i}^{*}(\zeta,\zeta)=-v_{i}(\zeta)=0$. We also have, 
\begin{align}\label{m21}
	h_{i}^{l}(\tilde{P}X,\tilde{P}Y)
	&=h_{i}^{l}(X,Y)-\eta(Y)h_{i}^{l}(X,\zeta)-\eta(X)h_{i}^{l}(\zeta,Y),
\end{align}
for all $X,Y\in \Gamma(TM)$, in which we have used the fact that $h_{i}^{l}(\zeta,\zeta)=-u_{i}(\zeta)=0$. 

Then, we have the following definition; 
\begin{definition}
	\rm{
	Let $(\overline{M},\overline{\phi}, \zeta,\eta, \overline{g})$ be an indefinite almost contact manifold. A null submanifold $(M,g,S(TM))$, tangent to the structure vector field $\zeta$, is called contact locally screen conformal if the fundamental forms $h_{i}^{*}$ of $S(TM)$ are conformally related to the corresponding null fundamental forms $h_{i}^{l}$ of $M$, on the subbundle $D\oplus D'$ by 
\begin{align}\label{m22} 
	h_{i}^{*}(\tilde{P}X,\tilde{P}PY)=\varphi_{i}h_{i}^{l}(\tilde{P}X,\tilde{P}Y),\;\;\;\forall\, X,Y\in \Gamma(TM), \;\; i\in \{i,\ldots, r\},
\end{align}
where the $\varphi_{i}'$s are smooth functions on the neighbourhood  $\mathcal{U}$ of $M$. In view of (\ref{m14}), (\ref{m20}) and (\ref{m21}), $M$ is contact locally screen conformal if 
\begin{align}\label{m30}
	h_{i}^{*}(X,PY)=\varphi_{i}\{h_{i}^{l}&(X,Y)+u_{i}(X)\eta(Y)+u_{i}(Y)\eta(X)\}\nonumber\\
	&-v_{i}(X)\eta(Y)+h_{i}^{*}(\zeta,Y)\eta(X),
\end{align}
for all $X,Y\in \Gamma(TM)$.
}
\end{definition}
It has has been established (see Theorem 5.2.3 of \cite[p. 204]{ds2}) that when $M$ is locally screen conformal then $S(TM)$ is integrable. However, this is not generally true for a contact locally screen conformal null submanifold. In fact, we have the following result.
\begin{theorem}
	The screen distribution $S(TM)$ of a contact locally screen conformal null submanifold of an indefinite Sasakian manifold is integrable if and only if $h^{*}_{i}(\zeta,PX)=-v_{i}(X)$, for all $X\in \Gamma(TM)$.
\end{theorem}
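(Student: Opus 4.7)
The plan is to recast the integrability of $S(TM)$ as a symmetry condition on the forms $h_i^*$ restricted to $S(TM)\times S(TM)$, then apply the contact screen conformal identity (\ref{m30}) to extract the stated condition. For the reduction, I take $X,Y\in\Gamma(S(TM))$ and expand $\overline{g}([X,Y],N_i)$ using the Gauss formula (\ref{eq11}) and the $S(TM)$-Gauss formula $\nabla_X Y=\nabla_X^{*}Y+\sum_j h_j^{*}(X,Y)\xi_j$ (valid since $PY=Y$), together with the symmetry of $h_j^{l}$ (which follows from torsion-freeness of $\overline{\nabla}$ and $\overline{g}(\xi_j,TM)=0$). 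Since $\overline{g}(\xi_j,N_i)=\delta_{ji}$ and $\nabla_X^{*}Y\in\Gamma(S(TM))$, this yields $\overline{g}([X,Y],N_i)=h_i^{*}(X,Y)-h_i^{*}(Y,X)$. Thus $S(TM)$ is integrable iff $h_i^{*}$ is symmetric on $S(TM)\times S(TM)$ for every $i$.

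Next, I apply (\ref{m30}) both to the pair $(X,Y)$ and to the swapped pair $(Y,X)$, noting that for $X,Y\in\Gamma(S(TM))$ we have $PX=X$ and $PY=Y$. Using the symmetry of $h_i^{l}$, the two $\varphi_i$-terms cancel on subtraction, leaving
\begin{equation*}
h_i^{*}(X,Y)-h_i^{*}(Y,X)=\eta(X)\{v_i(Y)+h_i^{*}(\zeta,Y)\}-\eta(Y)\{v_i(X)+h_i^{*}(\zeta,X)\}.
\end{equation*}
The integrability of $S(TM)$ is therefore equivalent to the vanishing of this right-hand side for all $X,Y\in\Gamma(S(TM))$.

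For the sufficiency, if $h_i^{*}(\zeta,PX)=-v_i(X)$ for all $X\in\Gamma(TM)$, then specialising to $X\in\Gamma(S(TM))$ (so $PX=X$) gives $h_i^{*}(\zeta,X)+v_i(X)=0$, which makes both bracketed terms above vanish; hence $h_i^{*}$ is symmetric on $S(TM)$ and $S(TM)$ is integrable. For the converse, I set $X=\zeta$ and take arbitrary $Y\in\Gamma(S(TM))$ in the displayed identity; since $\eta(\zeta)=1$ and since (\ref{m14}) forces $v_i(\zeta)=0$ and $h_i^{*}(\zeta,\zeta)=-v_i(\zeta)=0$, this collapses to $h_i^{*}(\zeta,Y)=-v_i(Y)$ for every $Y\in\Gamma(S(TM))$. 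To extend this to arbitrary $X\in\Gamma(TM)$, I use $PX\in\Gamma(S(TM))$ together with $U_i\in\Gamma(S(TM))$, so $v_i(X)=g(X,U_i)=g(PX,U_i)=v_i(PX)$, yielding $h_i^{*}(\zeta,PX)=-v_i(X)$. The only delicate step is the first (the identification of $\overline{g}([X,Y],N_i)$ with $h_i^{*}(X,Y)-h_i^{*}(Y,X)$); the remaining arguments are direct consequences of (\ref{m30}) and the $\zeta$-identities (\ref{m14}), so I anticipate no serious obstruction.
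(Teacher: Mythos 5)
Your proof is correct and follows essentially the same route as the paper: derive the antisymmetry identity $h_i^{*}(X,Y)-h_i^{*}(Y,X)=\eta(X)\{v_i(Y)+h_i^{*}(\zeta,Y)\}-\eta(Y)\{v_i(X)+h_i^{*}(\zeta,X)\}$ on $S(TM)$ from (\ref{m30}), identify integrability of $S(TM)$ with symmetry of the $h_i^{*}$ there, and specialise to $\zeta$ using $v_i(\zeta)=0$. The only difference is cosmetic: you prove the integrability criterion $\overline{g}([X,Y],N_i)=h_i^{*}(X,Y)-h_i^{*}(Y,X)$ directly from the Gauss formulas and spell out the converse, where the paper cites Theorem 5.1.5 of \cite{ds2} and calls the converse obvious.
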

\begin{proof}
	In view of (\ref{m30}) and the symmetry of $h_{i}^{l}$'s, we have 
	\begin{align}\label{m31}
		h_{i}^{*}(X,Y)-h_{i}^{*}(Y,X)&=v_{i}(Y)\eta(X)-v_{i}(X)\eta(Y)\nonumber\\
		&+h_{i}^{*}(\zeta,Y)\eta(X)-h_{i}^{*}(\zeta,X)\eta(Y),
	\end{align}
	for all $X,Y\in \Gamma(S(TM))$. Now, if $S(TM)$ is integrable then all $A_{N_{i}}$ are symmetric on $S(TM)$ by Theorem 5.1.5 of \cite{ds2}. Hence, the left hand side of (\ref{m31}) vanishes which further implies that $v_{i}(Y)\eta(X)-v_{i}(X)\eta(Y)+h_{i}^{*}(\zeta,Y)\eta(X)-h_{i}^{*}(\zeta,X)\eta(Y)=0$, for all $X,Y\in \Gamma(TM)$. Letting $Y=\zeta$ in this relation we get $-v_{i}(X)+h_{i}^{*}(\zeta,\zeta)\eta(X)-h_{i}^{*}(\zeta,X)=0$. But, by (\ref{m14}), we see that $h_{i}^{*}(\zeta,\zeta)=-v_{i}(\zeta)=0$. Hence, we have $h_{i}^{*}(\zeta,X)=-v_{i}(X)$. The converse is obvious, which completes the proof. 
\end{proof}
\begin{theorem}\label{main4}
	Let $(M,g)$ be a contact screen conformal coisotropic null submanifold, tangent to $\zeta$, of an indefinite Sasakian space form $\overline{M}(c)$. Then, $c=-3$. Moreover, the functions $\varphi_{i}$, $i\in \{i,\ldots, r\}$, satisfies the  differential equations 
	\begin{align}\label{m40}
		(\xi_{j}\varphi_{i})h_{i}^{l}(V_{i},U_{i})-\sum_{k=1}^{r}[\varphi_{k}\tau_{ik}(\xi_{j})+\varphi_{i}\tau_{ki}(\xi_{j})]h_{k}^{l}(V_{i},U_{i})=0,
	\end{align}
	and the curvature tensor of $(M,g)$ takes the form 
	\begin{align}\label{m43}
		R(X,Y)Z&=-\sum_{i=1}^{r}\{h_{i}^{l}(X,Z)A_{N_{i}}Y-h_{i}^{l}(Y,Z)A_{N_{i}}X\}- \eta(X)\eta(Z)Y \nonumber\\
	        &+\eta(Y)\eta(Z)X-\overline{g} (X,Z)\eta(Y)\zeta+ \overline{g} (Y,Z)\eta(X)\zeta-\overline{g}(\overline{\phi}Y,Z)\phi X\nonumber\\
	        &+\overline{g}(\overline{\phi}X,Z)\phi Y+2\overline{g}(\overline{\phi}X,Y)\phi Z, \;\;\;\forall\,X,Y,Z\in \Gamma(TM).
	\end{align}
\end{theorem}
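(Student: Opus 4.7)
The proof has three parts: determining $c$, deriving the differential equation \eqref{m40}, and producing the curvature formula \eqref{m43}. The central trick is to compute $\overline{R}(\xi_j, V_i)U_i$ by two independent routes and compare.

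From the Sasakian space-form identity \eqref{p25}, and using $\eta(\xi_j) = \eta(V_i) = \eta(U_i) = 0$ together with $\overline{g}(V_i, U_j) = \delta_{ij}$, the relations $\overline{\phi}\xi_j = -V_j$ and $\overline{\phi}V_i = \xi_i$ (from \eqref{m6} and \eqref{m1}), and the skew-symmetry of $\overline{\phi}$, direct computation collapses \eqref{p25} to
\[
4\,\overline{R}(\xi_j, V_i)U_i = (c+3)\xi_j + (c-1)\delta_{ij}\xi_i,
\]
and pairing with $N_k$ yields $4\,\overline{g}(\overline{R}(\xi_j, V_i)U_i, N_k) = (c+3)\delta_{jk} + (c-1)\delta_{ij}\delta_{ik}$. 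On the intrinsic side, the Gauss equation \eqref{m9} converts this pairing into $\overline{g}(R(\xi_j, V_i)U_i, N_k)$ plus $A_{N_l}$-correction terms, and the Codazzi part of \eqref{m10} paired with $N_k$ rewrites $\overline{g}(R(\xi_j, V_i)U_i, N_k)$ in terms of $(\nabla h_k^*)$'s and $\tau_{km}$'s. Substituting the contact screen conformal relation \eqref{m30}, which reduces to $h_k^*(X, PY) = \varphi_k h_k^l(X,Y)$ on $D \oplus D'$, and simplifying $\nabla_{\xi_j}V_i$ and $\nabla_{\xi_j}U_i$ via \eqref{m12}-\eqref{m15}, expresses the whole intrinsic side as a combination of $\varphi_k$, the $h_k^l$'s, and the $\tau_{km}$'s.

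Specialising to $j = k \neq i$ removes the $(c-1)$-piece on the ambient side, leaving $(c+3)/4$ balanced against an intrinsic expression that — after applying the symmetry of $h_i^l$, \eqref{m11}, and cancellations among the Codazzi and Gauss-correction terms — vanishes identically; consequently $c + 3 = 0$ and $c = -3$. Specialising instead to $i = j = k$ and using $h_i^l(V_i, U_i) = h_i^*(V_i, V_i) = \varphi_i h_i^l(V_i, V_i)$ (via \eqref{m11} and \eqref{m30}), the $(\xi_j\varphi_i)$-coefficient in \eqref{m40} arises from expanding $(\nabla_{\xi_j} h_i^*)(V_i, U_i)$ with the product rule, while the combination $\varphi_k \tau_{ik}(\xi_j) + \varphi_i \tau_{ki}(\xi_j)$ emerges from the $\tau_{km}$-sum in the Codazzi equation after substituting the conformal relation, giving \eqref{m40}. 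Finally, with $c = -3$ in hand, substitution into \eqref{p25} kills the $(c+3)$ summand and makes $(c-1) = -4$; taking the tangent projection through \eqref{m9} and using $(\overline{\phi}X)^{\tan} = \phi X$ (from \eqref{m8}) produces \eqref{m43}.

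The main obstacle is the intrinsic-side computation: for $j = k \neq i$, one must verify that the sum of $(\nabla h_k^*)$-terms, $\tau_{km}$-terms, and the Gauss-type $A_{N_l}$-corrections cancels identically under the contact conformal hypothesis. This requires careful use of \eqref{m11}-\eqref{m15}, the symmetry of the null second fundamental forms, and possibly the integrability-type identity $h_i^*(\zeta,PX) = -v_i(X)$ from the preceding theorem. Without such cancellation the $(c+3)$ coefficient cannot be isolated cleanly, so the bookkeeping in this step is the delicate point on which the whole argument turns.
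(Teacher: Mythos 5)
Your overall machinery is the right one --- the Codazzi equations obtained from (\ref{m9})--(\ref{m10}), the contact conformal relation, and evaluation on the frame $\{\xi_{j},V_{i},U_{i}\}$ --- but the step that actually produces $c=-3$ does not work as you describe. You propose to take the component along $N_{k}$ with $j=k\neq i$, note that the $(c-1)$-part of the ambient side drops out, and then argue that the remaining intrinsic expression ``vanishes identically'' after cancellations, so that $(c+3)/4=0$. That intrinsic expression is, in the paper's notation, the right-hand side of (\ref{m72}) with the pairing index equal to the radical index and distinct from the $V,U$-index, namely
$(\xi_{j}\varphi_{j})h_{j}^{l}(V_{i},U_{i})-\sum_{k=1}^{r}\varphi_{k}h_{k}^{l}(V_{i},U_{i})\tau_{jk}(\xi_{j})-\sum_{k=1}^{r}\varphi_{j}h_{k}^{l}(V_{i},U_{i})\tau_{kj}(\xi_{j})$,
and the computation shows it equals $\tfrac{c+3}{4}$; its vanishing is therefore \emph{equivalent} to the conclusion $c=-3$, not an algebraic identity one can verify beforehand. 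The quantities $\xi_{j}\varphi_{j}$, $\tau_{jk}(\xi_{j})$ and $h_{k}^{l}(V_{i},U_{i})$ are not constrained by the structure equations (\ref{m11})--(\ref{m15}) alone, so no amount of bookkeeping will make this expression cancel a priori. You have correctly flagged this as the delicate point, but it is exactly the part that carries the theorem, and it is left unproved.

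The paper isolates $c+3$ by a different, non-circular device. It keeps the pairing index equal to the conformal index $i$ throughout, so that the unknown intrinsic side of (\ref{m60}) is literally the same expression in two different evaluations: once at $(Y,PZ)=(V_{\ell},U_{\ell})$ and once at $(Y,PZ)=(U_{\ell},V_{\ell})$, the two agreeing by the symmetry $h_{i}^{l}(V_{\ell},U_{\ell})=h_{i}^{l}(U_{\ell},V_{\ell})$. Because the term $-2\overline{g}(\overline{\phi}X,Y)\overline{\phi}Z$ in (\ref{p25}) is not symmetric under $Y\leftrightarrow Z$, the ambient sides come out as $\tfrac{c+3}{2}\delta_{ij}$ in (\ref{m73}) and $\tfrac{3}{4}(c+3)\delta_{ij}$ in (\ref{m74}); subtracting eliminates the unknown intrinsic expression entirely and forces $c=-3$, after which (\ref{m40}) is just the statement that the common right-hand side vanishes. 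Your treatment of (\ref{m43}) (substitute $c=-3$ into (\ref{p25}) and project through (\ref{m9})) matches the paper and is fine, but as written your argument for $c=-3$ contains a genuine gap.
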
	
\begin{proof}
	For all $X,Y,Z\in \Gamma(D\oplus D')$, relations (\ref{m14}) and  (\ref{m22}) leads to 
	\begin{align}\label{m50}
		(\nabla_{X}&h_{i}^{*})(Y,PZ)=(X\varphi_{i})h_{i}^{l}(Y,PZ)+\varphi_{i}\{Xh_{i}^{l}(Y,PZ)-h_{i}^{l}(\tilde{P}\nabla_{X}Y,PZ)\nonumber\\
		&- h_{i}^{l}(Y,\tilde{P}\nabla_{X}^{*}PZ)\}-h_{i}^{*}(\zeta,PZ)\eta(\nabla_{X}Y)+v_{i}(Y)\eta(\nabla_{X}^{*}PZ).
	\end{align}
	On the other hand, using (\ref{m50}), we have 
	\begin{align}\label{m51}
		(\nabla_{X}h_{i}^{l})(Y,PZ)&=Xh_{i}^{l}(Y,PZ)-h_{i}^{l}(\tilde{P}\nabla_{X}Y,PZ)-h_{i}^{l}(Y,\tilde{P}\nabla_{X}^{*}PZ)\nonumber\\
		&+u_{i}(PZ)\eta(\nabla_{X}Y)+u_{i}(Y)\eta(\nabla_{X}PZ),
	\end{align}
	for all $X,Y,Z\in \Gamma(D\oplus D')$. Using (\ref{m50}) and (\ref{m51}), we derive 
	\begin{align}\label{m54}
		(\nabla_{X}h_{i}^{*})(Y,PZ)&=(X\varphi_{i})h_{i}^{l}(Y,PZ)+\varphi_{i}\{ (\nabla_{X}h_{i}^{l})(Y,PZ)\nonumber\\
		&-u_{i}(PZ)\eta(\nabla_{X}Y)-u_{j}(Y)\eta(\nabla_{X}^{*}PZ)\}\nonumber\\
		&-h_{i}^{*}(\zeta,PZ)\eta(\nabla_{X}Y)+v_{i}(Y)\eta(\nabla_{X}^{*}PZ).
	\end{align}
	Interchanging $X$ and $Y$ in (\ref{m54}), subtracting the two relations and then use (\ref{m9}) and (\ref{m10}), we get
	\begin{align}\label{m59}
		\overline{g}(\overline{R}&(X,Y)PZ,N_{i})-\varphi_{i}\overline{g}(\overline{R}(X,Y)PZ,\xi_{i})\nonumber\\
		&=(X\varphi_{i})h_{i}^{l}(Y,PZ)-(Y\varphi_{i}) h_{i}^{l}(X,PZ)+\varphi_{i}\{ u_{i}(PZ)\eta(\nabla_{Y}X)\nonumber\\
		&-u_{i}(PZ)\eta(\nabla_{X}Y)+u_{i}(X)\eta(\nabla_{Y}^{*}PZ)-u_{i}(Y)\eta(\nabla_{X}^{*}PZ)\}\nonumber\\
		&+h_{i}^{*}(\zeta,PZ)\eta(\nabla_{Y}X)-h_{i}^{*}(\zeta,PZ)\eta(\nabla_{X}Y)+v_{i}(Y)\eta(\nabla_{X}^{*}PZ)\nonumber\\
		&-v_{i}(X)\eta(\nabla_{Y}^{*}PZ)+\sum_{j=1}^{r}\varphi_{j}\{ h_{j}^{l}(X,PZ)\tau_{ij}(Y)-h_{j}^{l}(Y,PZ)\tau_{ij}(X)\}\nonumber\\
		&-\sum_{j=1}^{r}\varphi_{i}\{h_{j}(Y,PZ)\tau_{ji}(X)-h_{j}^{l}(X,PZ)\tau_{ji}(Y)\}.
	\end{align}
	Then, applying (\ref{p25}) to (\ref{m59}) and then let $X=\xi_{k}$, we derive
	\begin{align}\label{m60}
		\frac{c+3}{4}&g(Y,PZ)\delta_{ik}+\frac{c-1}{4}\{u_{k}(PZ)v_{i}(Y)+2u_{k}(Y)v_{i}(PZ)\}\nonumber\\
		&\frac{c-1}{4}\varphi_{i}\{u_{k}(PZ)u_{i}(Y)+2u_{k}(Y)u_{i}(PZ)=(\xi_{k}\varphi_{i})h_{i}^{l}(Y,PZ)\nonumber\\
		&+\varphi_{i}\{u_{i}(PZ)u_{k}(Y)-u_{i}(PZ)\eta(\nabla_{\xi_{k}}Y)-u_{i}(Y)\eta(\nabla^{*}_{\xi_{k}}PZ)\}\nonumber\\
		&+h_{i}^{*}(\zeta,PZ)u_{k}(Y)-h_{i}^{*}(\zeta,PZ)\eta(\nabla_{\xi_{k}}Y)+v_{i}(Y)\eta(\nabla_{\xi_{k}}^{*}PZ)\nonumber\\
		&-\sum_{j=1}^{r}\varphi_{j}h_{j}^{l}(Y,PZ)\tau_{ij}(\xi_{k})-\sum_{j=1}^{r}\varphi_{i}h_{j}^{l}(Y,PZ)\tau_{ji}(\xi_{k}),
	\end{align}
	for all $Y,Z\in \Gamma(D\oplus D')$. Interchanging $j$ and $k$ in (\ref{m60}) and then substitute $Y=V_{\ell}$ and $PZ=U_{\ell}$, we get 
	\begin{align}\label{m62}
		\frac{c+3}{4}\delta_{ij}&+\frac{c-1}{4}\delta_{i\ell}\delta_{j\ell}=(\xi_{j}\varphi_{i})h_{i}^{l}(V_{\ell},U_{\ell})-\varphi_{i}\eta(\nabla_{\xi_{j}}V_{\ell})\delta_{i\ell}\nonumber\\
		&-h_{i}^{*}(\zeta,U_{\ell})\eta(\nabla_{\xi_{j}}V_{\ell})+\eta(\nabla_{\xi_{j}}U_{\ell})\delta_{i\ell}-\sum_{k=1}^{r}\varphi_{k}h_{k}^{l}(V_{\ell},U_{\ell})\tau_{ik}(\xi_{j})\nonumber\\
		&-\sum_{k=1}^{r}\varphi_{i}h_{k}^{l}(V_{\ell},U_{\ell})\tau_{ki}(\xi_{j}).
	\end{align}
	But, in view of (\ref{m12}) and (\ref{m15}), we have 
	\begin{align} 
		\eta(\nabla_{\xi_{j}}U_{\ell})&=\eta(\phi A_{N_{\ell}}\xi_{j})+\sum_{k=1}^{r}\tau_{\ell k}(\xi_{j})\eta(U_{k})-\theta_{\ell}(\xi_{j})\eta(\zeta)=-\delta_{\ell j},\label{m70}\\
		\eta(\nabla_{\xi_{j}}V_{\ell})&=\eta(\phi A^{*}_{\xi_{\ell}}\xi_{j})-\sum_{k=1}^{r}\tau_{k \ell }(\xi_{j})\eta(V_{k})+\sum_{k=1}^{r}h_{k}^{l}	(\xi_{j},\xi_{\ell})\eta(U_{k})=0.\label{m71}	
	\end{align}
	Replacing (\ref{m70})  and (\ref{m71}) in (\ref{m62}), we get 
	\begin{align}
		\frac{c+3}{4}\{\delta_{ij}+\delta_{\ell i}\delta_{\ell j}\}&=(\xi_{j}\varphi_{i})h_{i}^{l}(V_{\ell},U_{\ell})-\sum_{k=1}^{r}\varphi_{k}h_{k}^{l}(V_{\ell},U_{\ell})\tau_{ik}(\xi_{j})\nonumber\\
		&-\sum_{k=1}^{r}\varphi_{i}h_{k}^{l}(V_{\ell},U_{\ell})\tau_{ki}(\xi_{j}).\label{m72}
	\end{align}
	Setting $i=\ell$ in (\ref{m72}), we get 
	\begin{align}
		\frac{c+3}{2}\delta_{ij}&=(\xi_{j}\varphi_{i})h_{i}^{l}(V_{i},U_{i})-\sum_{k=1}^{r}\varphi_{k}h_{k}^{l}(V_{i},U_{i})\tau_{ik}(\xi_{j})\nonumber\\
		&-\sum_{k=1}^{r}\varphi_{i}h_{k}^{l}(V_{i},U_{i})\tau_{ki}(\xi_{j}).\label{m73}
\end{align}
On the other hand, if we set $Y=U_{\ell}$ and $PZ=V_{\ell}$ in (\ref{m60}), and then following the simplifications in (\ref{m62})-(\ref{m73}), we have 
\begin{align}
		\frac{3}{4}(c+3)\delta_{ij}&=(\xi_{j}\varphi_{i})h_{i}^{l}(V_{i},U_{i})-\sum_{k=1}^{r}\varphi_{k}h_{k}^{l}(V_{i},U_{i})\tau_{ik}(\xi_{j})\nonumber\\
		&-\sum_{k=1}^{r}\varphi_{i}h_{k}^{l}(V_{i},U_{i})\tau_{ki}(\xi_{j}).\label{m74}
\end{align}
Then, from (\ref{m73}) and (\ref{m74}), we have $\frac{c+3}{4}=0$ or simply $c=-3$. Moreover, we also have $$
(\xi_{j}\varphi_{i})h_{i}^{l}(V_{i},U_{i})-\sum_{k=1}^{r}\varphi_{k}h_{k}^{l}(V_{i},U_{i})\tau_{ik}(\xi_{j})-\sum_{k=1}^{r}\varphi_{i}h_{k}^{l}(V_{i},U_{i})\tau_{ki}(\xi_{j})=0,
$$
which proves (\ref{m40}). Finally, (\ref{m43}) follows from (\ref{m9}) and (\ref{p25}) with $c=-3$, which completes the proof.
\end{proof}	
The following result also follows from Theorem \ref{main4}.	
\begin{corollary}
	There does not exist any contact screen conformal coisotropic submanfold, tangent to $\zeta$, of an indefinite Sasakian space form $\overline{M}(c\ne -3)$.
\end{corollary}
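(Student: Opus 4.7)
The plan is to deduce the corollary as an immediate contrapositive consequence of Theorem \ref{main4}. Concretely, Theorem \ref{main4} asserts that \emph{if} a contact screen conformal coisotropic null submanifold $(M,g)$, tangent to $\zeta$, exists inside an indefinite Sasakian space form $\overline{M}(c)$, \emph{then} necessarily $c = -3$. The corollary is precisely the logically equivalent contrapositive of the first clause of that theorem.

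First I would set up the proof by argument by contradiction: assume such a contact screen conformal coisotropic submanifold $M$ does exist inside some $\overline{M}(c)$ with $c\neq -3$. Then the hypotheses of Theorem \ref{main4} are satisfied, so the conclusion $c=-3$ must hold, contradicting the assumption $c\neq -3$. This forces the non-existence claim. Since the full algebraic machinery—equations \eqref{m60}, \eqref{m62}, \eqref{m73}, and \eqref{m74}—has already been carried out inside the proof of Theorem \ref{main4}, no additional computation is required here; one only invokes the dichotomy \eqref{m73} vs.\ \eqref{m74} that pinned down the constant $c$.

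There is no substantive obstacle in this corollary beyond correctly quoting Theorem \ref{main4}. The only minor point to verify is that the derivation of $c = -3$ in Theorem \ref{main4} did \emph{not} use any assumption of the form $c=-3$ a priori; a quick re-inspection of the substitutions $Y=V_\ell$, $PZ=U_\ell$ and $Y=U_\ell$, $PZ=V_\ell$ confirms that the identities \eqref{m73} and \eqref{m74} hold for arbitrary $c$, so subtracting them produces $\tfrac{c+3}{4}=0$ with no circularity. Thus the contrapositive is valid and the corollary follows in a single line.
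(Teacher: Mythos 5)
Your argument is correct and matches the paper exactly: the paper offers no separate proof, simply noting that the corollary "follows from Theorem \ref{main4}," which is precisely the contrapositive reading you give. Nothing further is needed.
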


\end{document}